\documentclass{article}
\usepackage{amsmath}
\usepackage{amsthm}
\usepackage{amsfonts}
\usepackage{amsfonts}

\usepackage[all]{xy}

\def\Br{\operatorname{Br}}

\newtheorem{theorem}{Theorem}
\newtheorem{lemma}[theorem]{Lemma}

\begin{document}

\title{Branch points and stability}
\author{J.F. Jardine}

\maketitle

\begin{abstract}
The hierarchy poset and  branch point poset for a data set both admit a calculus of least upper bounds. A method involving upper bounds is used to show that the map of branch points associated to the inclusion of data sets is a controlled homotopy equivalence, where the control is expressed by an upper bound relation that is constrained by Hausdorff distance.
  \end{abstract}

\nocite{persist-htpy}

\bigskip



\section*{Introduction}

This paper is a discussion of
 clustering phenomena that arise in connection with inclusions $X \subset Y \subset \mathbb{R}^{n}$ of data sets, interpreted through the lens of hierarchies of clusters and branch points.
\medskip

Suppose that $X$ is a finite subset (a data set) in a metric space $Z$. There is a well known system of simplicial complexes $V_{s}(X)$ whose simplices are the subsets $\sigma$ of $X$ such that $d(x,y) \leq s$ for each pair of points $x,y \in \sigma$, where $d$ is the metric on $Z$. The complexes $V_{s}(X)$ are the Vietoris-Rips complexes for the data set $X$.

If $k$ is a positive integer, $L_{s,k}(X)$ is the subcomplex of $V_{s}(X)$ whose simplices $\sigma$ have vertices $x$ such that $d(x,y) \leq s$ for at least $k$ distinct points $y \ne x$ in $X$. This object is variously called a degree Rips complex, or a Lesnick complex. The number $k$ is a density parameter.

The simplicial complexes $V_{s}(X)$ and $L_{s,k}(X)$ are defined by their respective partially ordered sets (posets) of simplices $P_{s}(X)$ and $P_{s,k}(X)$ \cite{persist-htpy}. The corresponding nerves $BP_{s}(X)$ and $BP_{s,k}(X)$ are barycentric subdivisions of the respective complexes $V_{s}(X)$ and $L_{s,k}(X)$, and therefore have the same homotopy types. This identification of homotopy types is assumed in this paper, so that $V_{s}(X) = BP_{s}(X)$ and $L_{s,k}(X) = BP_{s,k}(X)$, respectively.

A relationship $s \leq t$ between spatial parameters induces an inclusion
\begin{equation*}
  L_{s,k}(X) \subset L_{t,k}(X).
\end{equation*}
Some of the complexes $L_{s,k}(X)$ could be empty, and $L_{s,k}(X)$ is the barycentric subdivision of a big simplex for $s$ sufficiently large if $k$ is bounded above by the the cardinality of $X$. Observe also that $L_{s,0}(X) = V_{s}(X)$, and that the subobjects $L_{s,k}(X)$ filter $V_{s}(X)$.
\medskip

For a fixed integer $k$, the sets $\pi_{0}L_{s,k}(X)$ of path components, as $s$ varies, define a tree $\Gamma_{k}(X)$ with elements $(s,[x])$ such that $[x] \in \pi_{0}L_{s,k}(X)$. 

The tree $\Gamma_{k}(X)$ is the object studied by the HDBSCAN clustering algorithm, while the individual sets of clusters $\pi_{0}L_{s,k}(X)$ are computed for the DBSCAN algorithm.

The tree $\Gamma_{k}(X)$ has a subobject $\Br_{k}(X)$ whose elements are the branch points of the tree $\Gamma_{k}(X)$.
The branch points of $\Gamma_{k}(X)$ are in one to one correspondence with the stable components for $\Gamma_{k}(X)$ that are defined in \cite{clusters}, in the sense that every stable component starts at a unique branch point. We replace the stable component discussion of \cite{clusters} with the branch point tree $\Br_{k}(X)$, and make particular use of its ordering.

The branch point tree $\Br_{k}(X)$ is a highly compressed version of the hierarchy $\Gamma_{k}(X)$ that is produced by the HDBSCAN algorithm.

We derive a stability result (Theorem \ref{th 2}) for the branch point tree. This result follows from a stability theorem for the degree Rips complex \cite{persist-htpy}, together with a calculus of least upper bounds for the branch point tree that is developed in the next section.
\medskip

Suppose that $i: X \subset Y$ are data sets in $Z$, and that $r > 0$.
Suppose that the Hausdorff distance
$d_{H}(X_{dis}^{k+1},Y_{dis}^{k+1}) < r$ in $Z^{k+1}$, 
where $X^{k+1}_{dis}$ is the set of $k+1$ distinct points in $X$, interpreted as a subset of the product metric space $Z^{k+1}$. The inclusion $i$ induces an inclusion $i: L_{s,k}(X) \to L_{s,k}(Y)$ of simplicial complexes, which is natural in all $s$ and $k$.

The stability theorem for the degree Rips complex (Theorem 6 of \cite{persist-htpy}, which is a statement about posets) implies the following:

\begin{theorem}\label{th 1}
  Suppose that $X \subset Y \subset Z$ are data sets, and we have the relation
\begin{equation*}
  d_{H}(X_{dis}^{k+1},Y_{dis}^{k+1}) < r
\end{equation*}
on Hausdorff distance between associated configuration spaces in $Z^{k+1}$.
Then there is a diagram of simplicial complex maps
\begin{equation}\label{eq 1}
  \xymatrix{
    L_{s,k}(X) \ar[r]^{\sigma} \ar[d]_{i} & L_{s+2r}(X) \ar[d]^{i} \\
    L_{s,k}(Y) \ar[r]_{\sigma} \ar[ur]^{\theta} & L_{s+2r}(Y)
  }
\end{equation}
in which the horizontal and vertical maps are natural inclusions. The upper triangle of the diagram commutes, and the lower triangle commutes up to a homotopy which fixes $L_{s,k}(X)$.
\end{theorem}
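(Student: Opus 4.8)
The plan is to lift the entire square to the level of simplex posets and to transport the conclusion back across the nerve functor $B$. Each complex in \eqref{eq 1} is the nerve of its poset of simplices, $L_{s,k}(X) = BP_{s,k}(X)$ and so on, and each of the maps $\sigma$, $i$, $\theta$ is induced by an order-preserving map of these posets. I would therefore first record the poset-level diagram furnished by Theorem 6 of \cite{persist-htpy}: under the hypothesis $d_{H}(X^{k+1}_{dis},Y^{k+1}_{dis}) < r$, that result supplies an order-preserving map $\theta_{\ast}\colon P_{s,k}(Y) \to P_{s+2r}(X)$, where the approximation of a $(k+1)$-configuration in $Y$ by one in $X$ within distance $r$ is exactly what certifies that the chosen approximating simplex in $X$ lies in $L_{s+2r}(X)$ at the enlarged scale $s+2r$. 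Setting $\theta = B\theta_{\ast}$, $\sigma = B\sigma_{\ast}$, and $i = Bi_{\ast}$ then produces the required simplicial maps.

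The upper triangle should commute strictly. At the poset level the relevant identity is $\theta_{\ast}\,i_{\ast} = \sigma_{\ast}$, expressing that approximating the image of an $X$-simplex recovers the scaled $X$-simplex; this is a built-in feature of the construction of $\theta_{\ast}$, and since $B$ is a functor it is preserved verbatim, giving $\theta i = \sigma$.

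For the lower triangle I would argue via comparability of order-preserving maps. The two composites $i_{\ast}\theta_{\ast}$ and $\sigma_{\ast}$ from $P_{s,k}(Y)$ to $P_{s+2r}(Y)$ are pointwise comparable: passing a $Y$-simplex through $\theta_{\ast}$ and then including its vertices back into $Y$ yields a simplex contained in (hence $\le$ to) the one obtained by first including and then scaling to $s+2r$. Any relation $f \le g$ of poset maps $P \to Q$ is realized by the order-preserving map $P \times \{0 < 1\} \to Q$ sending $(x,0) \mapsto f(x)$ and $(x,1) \mapsto g(x)$, and applying $B$ together with the identification $B(P \times \{0<1\}) \cong BP \times \Delta^{1}$ yields an explicit homotopy $i\theta \simeq \sigma$.

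It remains to see that this homotopy fixes $L_{s,k}(X)$, and here the strict upper triangle does the work. On the subposet $i_{\ast}P_{s,k}(X) \subseteq P_{s,k}(Y)$ the two maps already coincide, since $i_{\ast}\theta_{\ast}i_{\ast} = i_{\ast}\sigma_{\ast} = \sigma_{\ast}i_{\ast}$ by the upper-triangle identity and the commutativity of the square of inclusions; thus $f = g$ on this subposet and the homotopy constructed above is constant there, i.e.\ it fixes $L_{s,k}(X)$. The main obstacle I anticipate is bookkeeping rather than conceptual: fixing the correct direction of the comparability $i_{\ast}\theta_{\ast} \le \sigma_{\ast}$ and confirming that the configuration-space Hausdorff hypothesis is precisely the input that Theorem 6 of \cite{persist-htpy} consumes. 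Once the poset statement is quoted correctly, the passage through $B$ and the rel-$L_{s,k}(X)$ refinement are formal.
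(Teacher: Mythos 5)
Your overall architecture---work at the level of the simplex posets $P_{s,k}$, quote the poset-level stability theorem of \cite{persist-htpy}, and transport everything across the nerve functor $B$---is exactly the route the paper intends; its entire proof of Theorem \ref{th 1} is in effect the citation of Theorem 6 of \cite{persist-htpy}. However, your argument for the lower triangle contains a genuine error. You claim that $i_{\ast}\theta_{\ast}$ and $\sigma_{\ast}$ are \emph{pointwise comparable} as poset maps $P_{s,k}(Y) \to P_{s+2r,k}(Y)$, because passing a $Y$-simplex through $\theta_{\ast}$ and including back into $Y$ ``yields a simplex contained in'' the scaled original. This is false. The order on the simplex poset is inclusion of simplices, and for $\tau = \{y_{0},\dots,y_{p}\}$ in $P_{s,k}(Y)$ the simplex $i_{\ast}\theta_{\ast}(\tau) = \{\theta(y_{0}),\dots,\theta(y_{p})\}$ consists of approximating points of $X$, which are typically not vertices of $\tau$ at all: $\theta$ moves each vertex by up to $r$, it does not delete vertices. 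So neither $i_{\ast}\theta_{\ast}(\tau) \subseteq \tau$ nor $\tau \subseteq i_{\ast}\theta_{\ast}(\tau)$ holds in general. You flag ``fixing the correct direction of the comparability'' as your anticipated bookkeeping issue, but the real problem is that \emph{no} direction is correct: the two maps are incomparable, and the single-step homotopy you build from $f \le g$ does not exist.

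The repair is the standard contiguity (union) argument, which is what the cited stability theorem actually encodes. For $\tau \in P_{s,k}(Y)$, check that $\tau \cup i_{\ast}\theta_{\ast}(\tau)$ is a simplex of $L_{s+2r,k}(Y)$: the triangle inequality gives $d(y_{i},\theta(y_{j})) \le s+r$, one has $d(\theta(y_{i}),\theta(y_{j})) \le s+2r$ since $\theta_{\ast}(\tau) \in P_{s+2r,k}(X)$, and the degree conditions on the vertices $\theta(y_{j})$ hold at scale $s+2r$ for the same reason. Hence $h(\tau) = \tau \cup i_{\ast}\theta_{\ast}(\tau)$ defines a poset map $h: P_{s,k}(Y) \to P_{s+2r,k}(Y)$ with the two pointwise relations $i_{\ast}\theta_{\ast} \le h \ge \sigma_{\ast}$, and applying $B$ to this zigzag produces the homotopy $i\theta \simeq \sigma$. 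Your rel-$L_{s,k}(X)$ argument then survives, and indeed becomes cleaner: choose $\theta$ so that $\theta(x) = x$ for $x \in X$ (this choice is what makes the upper triangle strictly commute, and it is legitimate because the Hausdorff hypothesis forces such an $x$ to be a vertex of $L_{s+r,k}(X)$). With that choice all three maps $i_{\ast}\theta_{\ast}$, $h$, $\sigma_{\ast}$ agree on simplices coming from $X$, so each of the two single-step homotopies in the zigzag is constant on $BP_{s,k}(X) = L_{s,k}(X)$, which is the required fixing statement.
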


Theorem \ref{th 1} specializes to the Rips Stability Theorem in the case $k=0$  (see \cite{persist-htpy}, \cite{BlumLes}).
The picture (\ref{eq 1}) is often called a homotopy interleaving.
\medskip

Application of the path component functor $\pi_{0}$ to the diagram (\ref{eq 1}) gives a commutative diagram
\begin{equation}\label{eq 2}
  \xymatrix{
    \pi_{0}L_{s,k}(X) \ar[r]^{\sigma} \ar[d]_{i} & \pi_{0}L_{s+2r}(X) \ar[d]^{i} \\
    \pi_{0}L_{s,k}(Y) \ar[r]_{\sigma} \ar[ur]^{\theta} & \pi_{0}L_{s+2r}(Y)
  }
\end{equation}
which is an interleaving of clusters. This is true for all homotopy invariants: in particular, application of homology functors to (\ref{eq 1}) produces interleaving diagram in homology groups. 
\medskip

The tree $\Gamma_{k}(X)$ has least upper bounds, and these restrict to least upper bounds for the subtree $\Br_{k}(X)$ of branch points (Lemma \ref{lem 3}).

The inclusion $\Br_{k}(X) \subset \Gamma_{k}(X)$ is a homotopy equivalence of posets, where the homotopy inverse is defined by taking the maximal branch point $(s_{0},[x_{0}]) \leq (s,[x])$ below $(s,[x])$ for each object of $\Gamma_{k}(X)$. The existence of the maximal branch point below an object $(s,[x])$ is a consequence of Lemma \ref{lem 6}.

The poset map $i:\Gamma_{k}(X) \to \Gamma_{k}(Y)$ defines a poset map $i_{\ast}: \Br_{k}(X) \to \Br_{k}(Y)$, via the homotopy equivalences for the data sets $X$ and $Y$ of the last paragraph.
The maps $\theta: \pi_{0}L_{s,k}(Y) \to \pi_{0}L_{s+2r}(X)$ induce morphisms of trees $\theta_{\ast}: \Gamma_{k}(Y) \to \Gamma_{k}(X)$ and $\theta_{\ast}: \Br_{k}(Y) \to \Br_{k}(X)$.

We then have the following:

\begin{theorem}\label{th 2}
Under the assumptions of Theorem \ref{th 1}, there is a homotopy commutative diagram
\begin{equation}\label{eq 3}
  \xymatrix{
    \Br_{k}(X) \ar[r]^{\sigma_{\ast}} \ar[d]_{i_{\ast}} & Br_{k}(X) \ar[d]^{i_{\ast}} \\
    \Br_{k}(Y) \ar[r]_{\sigma_{\ast}} \ar[ur]^{\theta_{\ast}} & \Br_{k}(Y)
  }
\end{equation}
of morphisms of trees. 
\end{theorem}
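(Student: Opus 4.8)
The plan is to push the entire statement down to its strictly commutative analogue on the full hierarchy trees $\Gamma_{k}$, and then transport that commutativity across the homotopy equivalences $\Br_{k} \simeq \Gamma_{k}$. The essential observation is that the one piece of non-strictness in Theorem~\ref{th 1}, namely the lower triangle that commutes only up to homotopy, disappears under $\pi_{0}$: homotopic maps of complexes induce the same function on path components, so for every $s$ the diagram (\ref{eq 2}) is genuinely commutative in \emph{both} triangles. Since the maps $i$, $\sigma$ and $\theta$ induce tree morphisms natural in the spatial parameter $s$ (as recorded in the excerpt), these objectwise identities on $\pi_{0}$-sets glue, over the elements $(s,[x])$, into a strictly commutative diagram of tree morphisms
\begin{equation*}
  \xymatrix{
    \Gamma_{k}(X) \ar[r]^{\sigma_{\Gamma}} \ar[d]_{i_{\Gamma}} & \Gamma_{k}(X) \ar[d]^{i_{\Gamma}} \\
    \Gamma_{k}(Y) \ar[r]_{\sigma_{\Gamma}} \ar[ur]^{\theta_{\Gamma}} & \Gamma_{k}(Y)
  }
\end{equation*}
whose triangles record the identities $\theta_{\Gamma}\, i_{\Gamma} = \sigma_{\Gamma}$ on $\Gamma_{k}(X)$ and $i_{\Gamma}\, \theta_{\Gamma} = \sigma_{\Gamma}$ on $\Gamma_{k}(Y)$, where $\sigma_{\Gamma}$ denotes in each case the $2r$-shift endomorphism.

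Next I would fix the comparison data. Write $j_{X} : \Br_{k}(X) \subset \Gamma_{k}(X)$ for the inclusion and $\rho_{X} : \Gamma_{k}(X) \to \Br_{k}(X)$ for the retraction sending $(s,[x])$ to the maximal branch point below it, whose existence and monotonicity follow from the tree structure and Lemma~\ref{lem 6}; similarly for $Y$. By construction $\rho_{X}\, j_{X} = \mathrm{id}$, while $j_{X}\, \rho_{X}(s,[x]) \leq (s,[x])$ pointwise, so the standard fact that an inequality of poset maps induces a homotopy of nerves gives $j_{X}\, \rho_{X} \simeq \mathrm{id}$; the same holds over $Y$. With this notation the three branch-point maps are, by their very definition, the conjugates $i_{\ast} = \rho_{Y}\, i_{\Gamma}\, j_{X}$, $\theta_{\ast} = \rho_{X}\, \theta_{\Gamma}\, j_{Y}$, and $\sigma_{\ast} = \rho_{X}\, \sigma_{\Gamma}\, j_{X}$ on $X$ (respectively $\rho_{Y}\, \sigma_{\Gamma}\, j_{Y}$ on $Y$).

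The conclusion is then a formal cancellation, with the two displayed homotopies $j\rho \simeq \mathrm{id}$ used to delete the internal pairs. For the upper triangle of (\ref{eq 3}),
\begin{equation*}
  \theta_{\ast}\, i_{\ast} = \rho_{X}\, \theta_{\Gamma}\, (j_{Y}\, \rho_{Y})\, i_{\Gamma}\, j_{X} \simeq \rho_{X}\, \theta_{\Gamma}\, i_{\Gamma}\, j_{X} = \rho_{X}\, \sigma_{\Gamma}\, j_{X} = \sigma_{\ast},
\end{equation*}
where the homotopy uses $j_{Y}\,\rho_{Y} \simeq \mathrm{id}$ together with the preservation of homotopies under pre- and post-composition, and the middle equality is the strict relation $\theta_{\Gamma}\, i_{\Gamma} = \sigma_{\Gamma}$. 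The lower triangle is identical after interchanging $X$ and $Y$:
\begin{equation*}
  i_{\ast}\, \theta_{\ast} = \rho_{Y}\, i_{\Gamma}\, (j_{X}\, \rho_{X})\, \theta_{\Gamma}\, j_{Y} \simeq \rho_{Y}\, i_{\Gamma}\, \theta_{\Gamma}\, j_{Y} = \rho_{Y}\, \sigma_{\Gamma}\, j_{Y} = \sigma_{\ast}.
\end{equation*}
Since both homotopies arise from poset inequalities, they are realized on the nerves $B\Br_{k}$, which is exactly the sense in which (\ref{eq 3}) is homotopy commutative.

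I expect the only genuine work to sit upstream of this formal argument, in two places. First one must be certain that the $\Gamma_{k}$-level diagram is \emph{strictly} commutative, which rests on the homotopy of Theorem~\ref{th 1} collapsing to an equality under $\pi_{0}$ and on honest naturality in $s$ of the induced tree morphisms. The main obstacle, however, is the well-definedness and order-preservation of $\rho$: the maximal branch point below $(s,[x])$ must be unique, so that $\rho$ is a function, and must respect the ordering, so that $\rho$ is a poset map. This is precisely where the tree hypothesis and Lemma~\ref{lem 6} are needed — the up-set of any node is a chain, so each node lies on a unique lineage whose lowest branch point is well defined and varies monotonically. Once $\rho$ is in hand with $j\rho \leq \mathrm{id}$, the interleaving of branch-point trees follows from the cancellation above with no further geometry.
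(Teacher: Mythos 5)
Your proof is correct and takes essentially the same route as the paper: the paper defines $i_{\ast}$, $\theta_{\ast}$, $\sigma_{\ast}$ as exactly your conjugates $\rho \circ (-) \circ j$ (its $max \circ (-) \circ \alpha$), and its terse ``maximality argument'' for the pointwise relations $\theta_{\ast}i_{\ast} \leq \sigma_{\ast}$ and $i_{\ast}\theta_{\ast} \leq \sigma_{\ast}$ is precisely what your formal cancellation via $j\rho \leq \mathrm{id}$, monotonicity of $\rho$, and strict $\pi_{0}$-level commutativity unpacks to, with the inequalities realized as homotopies of poset maps. One aside in your last paragraph is inaccurate --- the branch points \emph{below} a node need not form a chain (only up-sets are chains), so existence and uniqueness of the maximal branch point rest on joins of branch points being branch points (Lemmas \ref{lem 3} and \ref{lem 5}) rather than on the lineage picture --- but since you defer to the paper's lemmas for this, it does not affect the argument.
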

This paper is devoted to a proof and interpretation of this result.

\section{Branch points and upper bounds}

Fix the density number $k$ and suppose that $L_{s,k}(X) \ne \emptyset$ for $s$ sufficiently large. Apply the path component functor to the $L_{s,k}(X)$, to get a diagram of functions
\begin{equation*}
  \dots \to \pi_{0}L_{s,k}(X) \to \pi_{0}L_{t,k}(X) \to \dots
\end{equation*}

The graph $\Gamma_{k}(X)$ has vertices $(s,[x])$ with $[x] \in \pi_{0}L_{s,k}(X)$, and edges $(s,[x]) \to (t,[x])$ with $s \leq t$. This graph underlies a poset with a terminal object, and is therefore a tree (or hierarchy).

The morphisms of $\Gamma_{k}(X)$ are relations $(s,[x]) \leq (t,[y])$. The existence of such a relation means that  $[x] = [y] \in \pi_{0}L_{t,k}(X)$, or that the image of $[x] \in \pi_{0}L_{s,k}(X)$ is $[y]$ under the induced function $\pi_{0}L_{s,k}(X) \to \pi_{0}L_{t,k}(X)$.
\medskip

\noindent
    {\bf Remarks}:\ 1)\ Partitions of $X$ given by the set $\pi_{0}V_{s}(X)$ are standard clusters. The tree $\Gamma_{0}(X)=\Gamma(V_{\ast}(X))$ defines a hierarchical clustering that is similar to the single linkage clustering.
    \medskip

    \noindent
    2)\ The set $\pi_{0}L_{s,k}(X)$ gives a partitioning of the set of elements of $X$ having at least $k$ neighbours of distance $\leq s$, which is the subject of the DBSCAN algorithm. The tree $\Gamma_{k}(X)=\Gamma(\pi_{0}L_{\ast,k}(X))$ is the structural object underlying the HDBSCAN algorithm.
    \medskip

    A {\it branch point} in the tree $\Gamma_{k}(X)$ is a vertex $(t,[x])$ such that either of following two conditions hold:
    \begin{itemize}
    \item[1)] there is an $s_{0} < t$ such that for all $s_{0} \leq s < t$ there are distinct vertices $(s,[x_{0}])$ and $(s,[x_{1}])$ with $(s,[x_{0}]) \leq (t,[x])$ and $(s,[x_{1}]) \leq (t,[x])$, or
    \item[2)] there is no relation $(s,[y]) \leq (t,[x])$ with $s < t$.
\end{itemize}
The second condition means that a representing vertex $x$ of the path component $[x] \in \pi_{0}L_{t,k}(X)$ is not a vertex of $L_{s,k}(X)$ for $s < t$.    
Write $\Br_{k}(X)$ for the set of branch points $(s,[x])$ in $\Gamma_{k}(X)$.
\medskip

The set $\Br_{k}(X)$ inherits a partial ordering from the poset $\Gamma_{k}(X)$, and the inclusion $\Br_{k}(X) \subset \Gamma_{k}(X)$ of the set of branch points defines a monomorphism of posets. 

Every branch point $(s,[x])$ of $\Gamma_{k}(X)$ has $s=s_{i}$, where $s_{i}$ is a phase change number for $X$. The phase change numbers are the various distances $d(x,y)$ between the elements of the finite set $X$.

The branch point poset $\Br_{k}(X)$ is a tree, because the element $(s,[x])$ corresponding to the largest phase change number $s$ is terminal.
\medskip

Suppose that $(s,[x])$ and $(t,[y])$ are vertices of the graph $\Gamma_{k}(X)$. There is a vertex $(v,[w])$ such that $(s,[x]) \leq (v,[w])$ and $(t,[y]) \leq (v,[w])$. The two relations specify that $[x]=[z]=[y]$ in $\pi_{0}L_{v,k}(X)$.

There is a unique smallest vertex $(u,[z])$ which is an upper bound for both $(s,[x])$ and $(t,[y])$. The number $u$ is the smallest parameter (necessarily a phase change number) such that $[x]=[y]$ in $\pi_{0}L_{u,k}(X)$, and so $[z]=[x]=[y]$.
In this case, 
one writes
\begin{equation*}
  (s,[x]) \cup (t,[y]) = (u,[z]).
  \end{equation*}
The vertex $(u,[z])$ is the {\it least upper bound} (or join) of $(s,[x])$ and $(t,[y])$.

Every finite collection of points $(s_{1},[x_{1}]), \dots ,(s_{p},[x_{p}])$ has a least upper bound
\begin{equation*}
  (s_{1},[x_{1}]) \cup \dots \cup (s_{p},[x_{p}])
\end{equation*}
in the tree $\Gamma_{k}(X)$.

\begin{lemma}\label{lem 3}
  The least upper bound $(u,[z])$ of branch points $(s,[x])$ and $(t,[y])$ is a branch point.
\end{lemma}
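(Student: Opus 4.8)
The plan is to argue by a short case analysis on the relationship between $u$ and $\max(s,t)$, exploiting the defining property of the join recorded just above the statement: $u$ is the \emph{smallest} parameter at which the images of $[x]$ and $[y]$ coincide in $\pi_{0}L_{u,k}(X)$. Since the relations $(s,[x]) \leq (u,[z])$ and $(t,[y]) \leq (u,[z])$ force $s \leq u$ and $t \leq u$, we always have $u \geq \max(s,t)$, which leaves exactly two cases.

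First I would dispose of the boundary case $u = \max(s,t)$. After relabelling we may assume $s \leq t$, so that $u = t$. Then $(t,[y]) \leq (u,[z])$ together with $u = t$ forces $[z] = [y]$, whence $(u,[z]) = (t,[y])$ is a branch point by hypothesis. This case also subsumes the situation in which one of the two given branch points already lies below the other.

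The substantive case is $u > \max(s,t)$, and here I would verify condition 1) of the definition directly. Set $s_{0} = \max(s,t)$, so that $s_{0} < u$. For each $w$ with $s_{0} \leq w < u$ both images are defined, and I write $(w,[x]_{w})$ and $(w,[y]_{w})$ for the images of $(s,[x])$ and $(t,[y])$ in $\pi_{0}L_{w,k}(X)$; both lie below $(u,[z])$. The crucial observation is that these two vertices are distinct: if $(w,[x]_{w}) = (w,[y]_{w})$ for some such $w$, then $[x] = [y]$ in $\pi_{0}L_{w,k}(X)$ with $w < u$, contradicting the minimality of $u$. Thus for every $w \in [s_{0},u)$ there are two distinct vertices lying below $(u,[z])$, which is precisely condition 1).

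The only delicate point — and hence the main thing to pin down — is the verification that the two images remain distinct throughout the interval $[s_{0},u)$; this is exactly where the minimality characterisation of the least upper bound enters. It is worth remarking that in this second case the conclusion does not actually use the hypothesis that $(s,[x])$ and $(t,[y])$ are branch points: any join lying strictly above both of its arguments is automatically a branch point. The branch point hypothesis is needed only to settle the boundary case $u = \max(s,t)$.
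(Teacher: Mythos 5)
Your proof is correct and takes essentially the same approach as the paper's: a case split on whether $u = \max(s,t)$ (where the join collapses to one of the given branch points) or $u > \max(s,t)$ (where minimality of the least upper bound forces the two images to stay distinct on the whole interval below $u$, giving condition 1)). If anything, your version is slightly more careful than the paper's, which exhibits distinctness only at a single intermediate parameter $v$ rather than across the full interval that condition 1) formally requires.
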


\begin{proof}
If there is a number $v$ such that $s,t<v<u$, then $(v,[x])$ and $(v,[y])$ are distinct because $(u,[z])$ is a least upper bound, so that $(u,[z])$ is a branch point.

  Otherwise, $s=u$ or $t=u$, in which case $(u,[z]) = (s,[x])$ or $(u,[z])=(t,[y])$. In either case, $(u,[z])$ is a branch point.
  \end{proof}

It follows from Lemma \ref{lem 3} that any two branch points $(s,[x])$ and $(t,[y])$ have a least upper bound in $\Br_{k}(X)$, and that the poset inclusion $\alpha: \Br_{k}(X) \to \Gamma_{k}(X)$ preserves least upper bounds.
\medskip
  
  We have the following observation:

  \begin{lemma}\label{lem 4}
    Suppose that $(s_{1},[x_{1}]), (s_{2},[x_{2}])$ and $(s_{3},[x_{3}])$ are vertices of $\Gamma_{k}(X)$. Then
    \begin{equation*}
      (s_{1},[x_{1}]) \cup (s_{3},[x_{3}]) \leq ((s_{1},[x_{1}]) \cup (s_{2},[x_{2}])) \cup ((s_{2},[x_{2}]) \cup (s_{3},[x_{3}])).
      \end{equation*}
  \end{lemma}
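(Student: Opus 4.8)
The plan is to treat this as a purely order-theoretic identity, using only that $\cup$ computes least upper bounds in $\Gamma_{k}(X)$, as established in the discussion preceding Lemma \ref{lem 3}. In particular, neither the density parameter $k$ nor the underlying metric plays any role; the statement holds in any join-semilattice. To lighten notation I would abbreviate $a_{i} = (s_{i},[x_{i}])$ for $i = 1,2,3$, so that the claim reads $a_{1} \cup a_{3} \leq (a_{1} \cup a_{2}) \cup (a_{2} \cup a_{3})$.

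First I would record the two defining properties of the join $\cup$. For any vertices $u$ and $v$ of $\Gamma_{k}(X)$, the vertex $u \cup v$ is an upper bound for both, so $u \leq u \cup v$ and $v \leq u \cup v$; and it is the \emph{least} such, so that $u \leq w$ and $v \leq w$ together force $u \cup v \leq w$. Setting $w = (a_{1} \cup a_{2}) \cup (a_{2} \cup a_{3})$, I would then show that $w$ is an upper bound for both $a_{1}$ and $a_{3}$. Two applications of the upper bound property together with transitivity of $\leq$ give
\[
  a_{1} \leq a_{1} \cup a_{2} \leq w, \qquad a_{3} \leq a_{2} \cup a_{3} \leq w.
\]
Since $a_{1} \cup a_{3}$ is by definition the least upper bound of $a_{1}$ and $a_{3}$, and $w$ is an upper bound for that same pair, the least-upper-bound property yields $a_{1} \cup a_{3} \leq w$, which is exactly the asserted inequality.

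There is no genuine obstacle here: the argument is formal and uses nothing beyond the universal property of the join. The only point that deserves attention is to confirm that $\cup$ really does deliver a least upper bound in $\Gamma_{k}(X)$, but this is precisely what is recorded before Lemma \ref{lem 3}. It is perhaps worth remarking that the middle vertex $a_{2}$ is inessential to the mechanism: it simply sits above both bracketed joins on the right, so that each of $a_{1}$ and $a_{3}$ is dominated by $w$ through its respective pairing with $a_{2}$.
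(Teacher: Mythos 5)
Your proof is correct: the paper states Lemma \ref{lem 4} as an observation without giving any argument, and your universal-property derivation (each of $a_{1}$ and $a_{3}$ lies below the right-hand side via its join with $a_{2}$, so the least upper bound $a_{1} \cup a_{3}$ does too) is exactly the intended reasoning. Your remark that the statement is purely order-theoretic and needs nothing beyond the existence of joins in $\Gamma_{k}(X)$, recorded before Lemma \ref{lem 3}, is also consistent with how the paper treats it.
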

  \medskip
  
  \noindent
  {\bf Remark}:\
   Carlsson and M\'emoli \cite{CM-2010B} define an ultrametric $d$ on $X=V_{0}(X)$, for which they say that $d(x,y)=s$, where $s$ is the minimum parameter value such that $[x]=[y] \in
 \pi_{0}V_{s}(X)$.

 The least upper bound concept is both an extension of and a potential replacement for this ultrametric, and Lemma \ref{lem 4} is the analog for the triangle inequality.

 The Carlsson-M\'emoli theory does not apply to the full tree $\Gamma_{k}(X)$, because the vertex sets of the Lesnick complexes $L_{s,k}(X)$ can vary with changes of the distance parameter $s$. We can, however, define an ultrametric on each of the sets $\pi_{0}L_{s,k}(X)$ as follows:
\smallskip

 Suppose given $[x]$ and $[y]$ in $\pi_{0}L_{s,k}(X)$ (or equivalently, points $(s,[x])$ and $(s,[y])$ in $\Gamma_{k}(X)$). Write $d([x],[y]) = u-s$, where $(s,[x]) \cup (s,[y]) = (u,[w])$.

\begin{lemma}\label{lem 5}
Every vertex $(s,[x])$ of $\Gamma_{k}(X)$ has a unique largest branch point $(s_{0},[x_{0}])$ such that $(s_{0},[x_{0}]) \leq (s,[x])$.
\end{lemma}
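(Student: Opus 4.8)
The plan is to prove the statement in two stages: first that the set $B$ of branch points lying below $(s,[x])$ is nonempty, and then that $B$ has a (necessarily unique) greatest element, obtained as the join of all its members. The second stage is where Lemma \ref{lem 3} does the real work.

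For existence, I would locate a branch point of the second (\emph{birth}) type. Each representing vertex $w$ of the component $[x]$ is a point of $X$, and there is a smallest phase change number $\beta(w)$ for which $w$ becomes a vertex of $L_{\beta(w),k}(X)$, namely the $k$-th smallest distance from $w$ to the other points of $X$. Any vertex $(t,[y])$ with $(t,[y]) \leq (s,[x])$ is represented by points that already lie in $[x]$ at level $s$, so the parameters $t$ admitting such a predecessor are bounded below by $s^{*} := \min_{w} \beta(w)$, the minimum taken over the finitely many vertices $w$ of $[x]$. Choosing $w$ that realizes this minimum and letting $[w]_{s^{*}}$ be its component in $L_{s^{*},k}(X)$, one has $(s^{*},[w]_{s^{*}}) \leq (s,[x])$ by functoriality of $\pi_{0}$. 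Minimality of $s^{*}$ forces that there is no relation $(s'',[z]) \leq (s^{*},[w]_{s^{*}})$ with $s'' < s^{*}$, since such a $(s'',[z])$ would also lie below $(s,[x])$ by transitivity; thus $(s^{*},[w]_{s^{*}})$ satisfies condition 2) and is a branch point, so $B \neq \emptyset$.

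For the maximum, observe that $B$ is finite: the first coordinate of a branch point is a phase change number, of which there are finitely many, and each set $\pi_{0}L_{s,k}(X)$ is finite. If $(a,[p])$ and $(b,[q])$ are members of $B$, then $(s,[x])$ is an upper bound for both, so their least upper bound satisfies $(a,[p]) \cup (b,[q]) \leq (s,[x])$, and by Lemma \ref{lem 3} this join is again a branch point; hence $B$ is closed under pairwise joins. Iterating over the finitely many elements, the join $(s_{0},[x_{0}])$ of all of $B$ is therefore itself an element of $B$ that dominates every branch point below $(s,[x])$. It is the largest such branch point, and a greatest element of a poset is unique.

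The main obstacle is the existence stage: one must ensure that tracing the component $[x]$ downward terminates at a genuine branch point rather than simply running out of parameters. The birth-point argument above handles this by reducing to the finitely many birth parameters $\beta(w)$ of the representing vertices. Once existence and finiteness are in hand, the maximality argument is a formal consequence of Lemma \ref{lem 3}.
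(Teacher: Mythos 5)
Your proposal is correct, and its second stage is exactly the paper's proof: the paper disposes of the lemma in one line by taking the least upper bound of the finite list of branch points $(t,[y]) \leq (s,[x])$ and invoking Lemma \ref{lem 3}, just as you do. The existence stage you add (producing a branch point of the second type below $(s,[x])$ via the minimal birth parameter $s^{*}$ of the vertices of $[x]$) is a detail the paper leaves implicit --- its one-line proof tacitly assumes the list of branch points below $(s,[x])$ is nonempty --- so your argument is the same route, carried out slightly more completely.
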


\begin{proof}
The least upper bound of the finite list of the branch points $(t,[y])$ such that $(t,[y]) \leq (s,[x])$ is a branch point, by Lemma \ref{lem 3}.
\end{proof}

In the situation of Lemma \ref{lem 5}, one says that $(s_{0},[x_{0}])$ is the {\it maximal branch point below} $(s,[x])$.

If $(s,[x])$ is a branch point, then the maximal branch point below $(s,[x])$ is $(s,[x])$, by construction.

\begin{lemma}\label{lem 6}
  Suppose that $(s_{0},[x_{0}])$ and $(t_{0},[y_{0}])$ are maximal branch points below the points $(s,[x])$ and $(t,[y])$ in $\Gamma_{k}(X)$, respectively.
  Then $(s_{0},[x_{0}]) \cup (t_{0},[y_{0}])$ is the maximal branch point below $(s,[x]) \cup (t,[y])$.
\end{lemma}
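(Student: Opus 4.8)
Write $p=(s,[x])$, $q=(t,[y])$, and let $w=p\cup q=(u,[z])$ be their join in $\Gamma_{k}(X)$; write $m(v)$ for the maximal branch point below a vertex $v$, which exists and is unique by Lemma \ref{lem 5}. The assertion to be proved is the identity $m(w)=m(p)\cup m(q)$, and the plan is to establish the two inequalities separately.

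The inequality $m(p)\cup m(q)\leq m(w)$ is the routine half. Both $m(p)$ and $m(q)$ are branch points, and $m(p)\leq p\leq w$, $m(q)\leq q\leq w$, so by Lemma \ref{lem 3} the join $m(p)\cup m(q)$ is a branch point that is $\leq w$. Since $m(w)$ is the largest branch point below $w$ by Lemma \ref{lem 5}, this forces $m(p)\cup m(q)\leq m(w)$.

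For the reverse inequality I would first record that $m$ is monotone: if $v\leq v'$ then $m(v)$ is a branch point with $m(v)\leq v\leq v'$, hence $m(v)\leq m(v')$. I would then split into two cases according to whether $p$ and $q$ are comparable. If they are comparable, say $p\leq q$, then $w=q$ and monotonicity gives $m(p)\leq m(q)$, so $m(p)\cup m(q)=m(q)=m(w)$, as required. If $p$ and $q$ are incomparable, then $u$ is strictly larger than both $s$ and $t$, and the argument in the proof of Lemma \ref{lem 3} shows that $w$ is itself a branch point; hence $m(w)=w$, and it remains only to check that $m(p)\cup m(q)=w$.

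This last computation is where the real content sits. Writing $m(p)=(s_{0},[x_{0}])$ and $m(q)=(t_{0},[y_{0}])$, the component $[x_{0}]$ flows into $[x]$ at parameter $s$ and $[y_{0}]$ flows into $[y]$ at parameter $t$, while $[x]$ and $[y]$ first coincide precisely at the parameter $u$. The join $m(p)\cup m(q)$ is determined by the least parameter at which $[x_{0}]$ and $[y_{0}]$ land in the same component, so the point to verify, and the main obstacle, is that $[x_{0}]$ and $[y_{0}]$ cannot merge at any parameter below $u$. If they did merge at some $v<u$, then at $v'=\max(v,s,t)<u$ the components $[x]$ and $[y]$ would already coincide, since each contains the common image of $[x_{0}]$ and $[y_{0}]$; this contradicts $w=p\cup q=(u,[z])$ with $u>\max(s,t)$. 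Hence the first merge of $[x_{0}]$ and $[y_{0}]$ occurs exactly at $u$, giving $m(p)\cup m(q)=(u,[z])=w=m(w)$ and completing the proof.
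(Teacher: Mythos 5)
Your proof is correct, and it reaches the conclusion by a genuinely different decomposition than the paper's. The paper assumes $s\leq t$, writes $(v,[z])=(s_{0},[x_{0}])\cup(t_{0},[y_{0}])$, and splits on whether $v\leq t$ or $v>t$: in the first case it uses the tree structure to deduce that $(s,[x])$ and $(t,[y])$ are comparable (both lie above $(s_{0},[x_{0}])$), whence maximality gives $(s_{0},[x_{0}])\leq(t_{0},[y_{0}])$ and the join collapses to $(t_{0},[y_{0}])$; in the second case it checks directly that $(s,[x])$ and $(t,[y])$ both lie below $(v,[z])$, so the two joins are equal and Lemma \ref{lem 3} finishes. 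You instead argue by two inequalities and split on comparability of $p=(s,[x])$ and $q=(t,[y])$. Your comparable case is handled by monotonicity of the operator $m$, which is tidier than the paper's tree-comparability step; your incomparable case rests on two facts the paper's proof of this lemma does not use: that the join of two incomparable vertices is itself a branch point (a reuse of the argument in the \emph{proof} of Lemma \ref{lem 3}, applied to vertices that need not be branch points --- legitimate, but note it goes slightly beyond the statement of that lemma), and that $[x_{0}]$ and $[y_{0}]$ cannot merge strictly before the parameter $u$, which is essentially the contrapositive of the paper's direct computation in its second case. Both routes are sound: yours isolates reusable general facts (monotonicity of $m$, branch-point-ness of joins of incomparable vertices, and the two-inequality framing via Lemma \ref{lem 5}), while the paper's is more compact and stays strictly within its stated lemmas.
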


\begin{proof}
  Suppose that $s \leq t$.
  
We have 
\begin{equation*}
(s_{0},[x_{0}]) \cup (t_{0},[y_{0}]) \leq (s,[x]) \cup (t,[y]).
\end{equation*}
and
$(s_{0},[x_{0}]) \cup (t_{0},[y_{0}])$
is a branch point by Lemma \ref{lem 3}.

Write
\begin{equation*}
(v,[z]) = (s_{0},[x_{0}]) \cup (t_{0},[y_{0}]). 
\end{equation*}

\noindent
1)\
Suppose that $v \leq t$.
Then
\begin{equation*}
  (t_{0},[y_{0}]) \leq (t,[y])=(t,[y_{0}])
\end{equation*}
and \begin{equation*}
  (t_{0},[y_{0}]) \leq (v,[z])=(v,[y_{0}]),
\end{equation*}
so that
\begin{equation*}
  (v,[z]) = (v,[y_{0}]) \leq (t,[y_{0}])= (t,[y])
\end{equation*}
since $v \leq t$.

Also, $(s_{0},[x_{0}]) \leq (s,[x])$ and $(s_{0},[x_{0}]) \leq (v,[z]) \leq (t,[y])$ so that $(s,[x]) \leq (t,[y])$.
\medskip

Then $(s_{0},[x_{0}]) \leq (t_{0},[y_{0}])$ by maximality, and it follows that
\begin{equation*}
  (s_{0},[x_{0}]) \cup (t_{0},[y_{0}]) = (t_{0},[y_{0}])
\end{equation*}
is the maximal branch point below
\begin{equation*}
  (s,[x]) \cup (t,[y]) = (t,[y])
\end{equation*}

\noindent
2)\
  Suppose that $v >t$. Then $(s,[x]) = (s,[x_{0}]) \leq (v,[z])$ and $(t,[y]) = (t,[y_{0}]) \leq (v,[z])$ because $s \leq t <v$, so that
\begin{equation*}
  (s,[x]) \cup (t,[y]) \leq (s_{0},[x_{0}]) \cup (t_{0},[y_{0}]),
\end{equation*}
Thus, $(s_{0},[x_{0}]) \cup (t_{0},[y_{0}]) = (s,[x]) \cup (t,[y])$ is a branch point, 
by Lemma \ref{lem 3}.
\end{proof}

\begin{lemma}
The poset inclusion $\alpha: \Br_{k}(X) \to \Gamma_{k}(X)$ has an inverse
\begin{equation*}
  max: \Gamma_{k}(X) \to \Br_{k}(X),
\end{equation*}
up to homotopy, and $\Br_{k}(X)$ is a strong deformation retract of $\Gamma_{k}(X)$.
\end{lemma}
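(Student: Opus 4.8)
The plan is to turn $\max$ into a genuine poset morphism, record the two composites with $\alpha$ as an honest identity on one side and a pointwise inequality on the other, and then read off both the homotopy and its stationarity on $\Br_k(X)$ from the nerve of a single comparison functor.

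First I would fix the definition $\max(s,[x]) = (s_{0},[x_{0}])$, the maximal branch point below $(s,[x])$, which exists by Lemma \ref{lem 5}; by the construction in that proof it is precisely the least upper bound of the (finite) set $B_{(s,[x])}$ of branch points $(t,[y]) \leq (s,[x])$. The first substantive step is monotonicity. If $(s,[x]) \leq (t,[y])$, then by transitivity every branch point below $(s,[x])$ is a branch point below $(t,[y])$, i.e. $B_{(s,[x])} \subseteq B_{(t,[y])}$; since the join of the larger set is an upper bound for the smaller, $\max(s,[x]) = \bigcup B_{(s,[x])} \leq \bigcup B_{(t,[y])} = \max(t,[y])$. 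Thus $\max$ is order preserving. Note that these joins exist and are branch points by Lemma \ref{lem 3}, and that $\max(s,[x]) \leq (s,[x])$ is immediate since $(s,[x])$ is itself an upper bound for $B_{(s,[x])}$.

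Next I would treat the two composites. On one side $\max \circ \alpha = \mathrm{id}_{\Br_{k}(X)}$ strictly, because the maximal branch point below a branch point is that branch point itself, as noted after Lemma \ref{lem 5}. On the other side $\alpha \circ \max \leq \mathrm{id}_{\Gamma_{k}(X)}$ pointwise, by the inequality $\max(s,[x]) \leq (s,[x])$ above. The standard device is to assemble these into a functor $H \colon \Gamma_{k}(X) \times [1] \to \Gamma_{k}(X)$, where $[1]$ is the poset $0 \leq 1$, sending $((s,[x]),0) \mapsto \alpha\max(s,[x])$ and $((s,[x]),1) \mapsto (s,[x])$. The generating relation $((s,[x]),0) \leq ((s,[x]),1)$ goes to $\alpha\max(s,[x]) \leq (s,[x])$; the level-$0$ morphisms use monotonicity of $\max$, and the mixed morphisms $((s,[x]),0) \leq ((t,[y]),1)$ are handled by $\alpha\max(s,[x]) \leq (s,[x]) \leq (t,[y])$. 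Passing to nerves, $BH \colon B\Gamma_{k}(X) \times \Delta^{1} \to B\Gamma_{k}(X)$ is a homotopy from $\alpha \circ \max$ to $\mathrm{id}$.

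Finally I would observe that $H$ is stationary on $\Br_{k}(X)$: for a branch point $(s,[x])$ we have $\alpha\max(s,[x]) = (s,[x])$, so $H$ restricted to $\Br_{k}(X) \times [1]$ factors through the projection onto $\Br_{k}(X)$, and the induced homotopy fixes $\Br_{k}(X)$ throughout. Together with $\max \circ \alpha = \mathrm{id}$, this exhibits $\max$ as a homotopy inverse to $\alpha$ and realizes $\Br_{k}(X)$ as a strong deformation retract of $\Gamma_{k}(X)$. I expect the one point needing care to be the verification that $H$ is well defined as a functor — in particular that the mixed morphisms land where they should — and this is exactly where order preservation of $\max$ is used; once that is in hand the rest is formal, since natural transformations of poset maps induce homotopies of nerves.
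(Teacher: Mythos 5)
Your proposal is correct and follows essentially the same route as the paper's proof: define $max$ via the maximal branch point of Lemma \ref{lem 5}, observe that maximality gives order preservation, that $max \cdot \alpha$ is the identity on $\Br_{k}(X)$, and that the pointwise relations $\alpha\, max (s,[x]) \leq (s,[x])$ give a homotopy fixing $\Br_{k}(X)$. The only difference is that you spell out details the paper leaves implicit, namely the monotonicity argument via the sets $B_{(s,[x])}$ and the explicit functor $H \colon \Gamma_{k}(X) \times [1] \to \Gamma_{k}(X)$ realizing the natural transformation as a homotopy of nerves.
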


\begin{proof}
Lemma \ref{lem 5} implies that every vertex $(s,[x])$ of $\Gamma_{k}(X)$ has a unique maximal branch point $(s_{0},[x_{0}])$ such that $(s_{0},[x_{0}]) \leq (s,[x])$. Set
\begin{equation*}
  max(s,[x]) = (s_{0},[x_{0}]).
\end{equation*}
The maximality condition implies that $max$ preserves the ordering. The composite
$max \cdot \alpha$ is the identity on $\Br_{k}(X)$, and the relations $(s_{0},[x_{0}]) \leq (s,x)$ define a homotopy $max \cdot \alpha \leq 1$ that restricts to the identity on $\Br_{k}(X)$.
\end{proof}

Return to the inclusion $i: X \subset Y \subset \mathbb{R}^{n}$ of finite data sets. Suppose that $d_{H}(X^{k+1}_{dis},Y^{k+1}_{dis}) < r$ and that $L_{s,k}(Y)$ is non-empty, as in the statement of Theorem \ref{th 1}.

Write $i_{\ast}: \Br_{k}(X) \to \Br_{k}(Y)$ for the composite poset morphism
\begin{equation*}
  \Br_{k}(X) \xrightarrow{\alpha} \Gamma_{k}(X) \xrightarrow{i_{\ast}} \Gamma_{k}(Y) \xrightarrow{max} \Br_{k}(Y)
  \end{equation*}
This map takes a branch point $(s,[x])$ to the maximal branch point below $(s,[i(x)])$. 
\medskip

\noindent
{\bf Remark}:\
The map $i_{\ast}: \Br_{k}(X) \to \Br_{k}(Y)$ only preserves least upper bounds up to homotopy. Suppose that $(s,[x])$ and $(t,[y])$ are branch points of $X$, and let $(s_{0},[x_{0}]) \leq (s,[i(x)])$ and $(t_{0},[y_{0}]) \leq (t,[i(y)])$ be maximal branch points below the images of $(s,[x])$ and $(t,[y])$ in $\Gamma_{k}(Y)$. Then $(s_{0},[x_{0}]) \cup (t_{0},[y_{0}])$ is the maximal branch point below $(s,[i(x)]) \cup (t,[i(y)])$ by Lemma \ref{lem 6}, but it may not be the maximal branch point below $i_{\ast}((s,[x]) \cup (t,[y]))$.
\medskip

Poset morphisms $\theta_{\ast}: \Br_{k}(Y) \to \Br_{k}(X)$ and $\sigma_{\ast}: \Br_{k}(X) \to \Br_{k}(X)$ are similarly defined, by the poset morphism $\theta: \Gamma_{k}(Y) \to \Gamma_{k}(X)$ given by $(t,[y]) \mapsto (t+2r,[\theta(y)])$, and the shift morphism $\sigma: \Gamma_{k}(X) \to \Gamma_{k}(X)$ given by $(s,[x]) \mapsto (s+2r,[x])$. These maps again preserve least upper bounds up to homotopy.
\medskip

\noindent
1)\ Consider the poset maps
\begin{equation*}
  \Br_{k}(X) \xrightarrow{i_{\ast}} \Br_{k}(Y) \xrightarrow{\theta_{\ast}} \Br_{k}(X).
\end{equation*}

If $(s,[x])$ is a branch point for $X$, choose maximal branch points $(s_{0},[x_{0}]) \leq (s,[i(x)]$ for $Y$, $(s_{1},[x_{1}]) \leq (s_{0}+2r,[\theta(x_{0})])$ and $(v,[y]) \leq (s+2r,[x])$ below the respective objects.

Then $\theta_{\ast}i_{\ast}(s,[x]) = (s_{1},[x_{1}])$, and there is a natural relation
\begin{equation*}
  \theta_{\ast}i_{\ast}(s,[x]) = (s_{1},[x_{1}]) \leq (v,[y]) = \sigma_{\ast}(s,[x]) 
\end{equation*}
by a maximality argument.
We therefore have a homotopy of poset maps
\begin{equation}\label{eq 4}
  \theta_{\ast}i_{\ast} \leq \sigma_{\ast}: \Br_{k}(X) \to \Br_{k}(X).
\end{equation}

\noindent
2)\ Similarly, if $(t,[y])$ is a branch point of $Y$, then
\begin{equation*}
  i_{\ast}\theta_{\ast}(t,[y]) \leq \sigma_{\ast}(t,[y]),
\end{equation*}
giving a homotopy
\begin{equation}\label{eq 5}
  i_{\ast}\theta_{\ast} \leq \sigma_{\ast}: \Br_{k}(Y) \to \Br_{k}(Y).
  \end{equation}

The construction of the poset maps $i_{\ast}$, $\theta_{\ast}$ and $\sigma_{\ast}$, together with the relations (\ref{eq 4}) and (\ref{eq 5}), complete the proof of Theorem \ref{th 2}.
\medskip

There are relations
\begin{equation}
  (s,[x]) \leq \sigma_{\ast}(s,[x]) \leq (s+2r,[x])
\end{equation}
for branch points $(s,[x])$.
It follows that the poset map $\sigma_{\ast}: \Br_{k}(X) \to \Br_{k}(X)$ is homotopic to the identity on $\Br_{k}(X)$.

It also follows that $\sigma_{\ast}(s,[x]) = (t,[x])$ is close to $(s,[x])$ in the sense that $t-s \leq 2r$. Thus, the branch points $(s,[x])$ and $\theta_{\ast}i_{\ast}(s,[x])$ have a common upper bound, namely $\sigma_{\ast}(s,[x])$, which is close to $(s,[x])$.

The subobject of $\Br_{k}(X)$ consisting of all branch points of the form $(s,[x])$ as $s$ varies has an obvious notion of distance: the distance between points $(s,[x])$ and $(t,[x])$ is $\vert t-s \vert$.  

If $(t,[y])$ is a branch point of $\Gamma_{k}(Y)$, the branch point $\sigma_{\ast}(t,[y])$ is similarly an upper bound for $(t,[y])$ and $i_{\ast}\theta_{\ast}(t,[y])$ that is close to $(t,[y])$.

\bibliographystyle{plain}
\bibliography{spt}

\end{document}